\newcommand{\colour}{\color}
\newcommand{\ljm}{{\text{ML}}}
\newcommand{\ljmm}{{\text{ML}^+}}
\newcommand{\exq}{\emph{ex falso sequitur quodlibet}}
\newcommand{\ex}{\emph{ex falso}}
\newcommand{\tnd}{\emph{tertium non datur}}
\newcommand{\dne}{\emph{double negation elimination}}
\newcommand{\oo}{\rightarrow}
\newcommand{\imp}{\supset}
\newtheorem{thm}{Theorem}%[section]
\newtheorem{lemma}{Lemma}%[section]
\newtheorem{cor}{Corollary}%[section]
\newenvironment{proof}{{\textbf{Proof.}} }{$\hfill \Box$}
\title{Relations between ex falso, tertium non datur, and double negation elimination}
\author{Pedro Francisco Valencia Vizca\'ino}
\date{March 30, 2013} %March 30, 2013
\begin{document}

\maketitle

\begin{abstract}
We show which implicational relations hold between the three principles  \exq,
 \tnd, and \dne, on the basis of minimal logic.
\end{abstract}

\section{Introduction}

By \exq, or simply \ex\ for short, we mean intuitively the idea that we can derive anything from a contradiction. In slightly more formal terms, we mean a principle that allows us to derive all formulas   of the form 
$$\neg A \imp (A\imp B)$$ where $A$ and $B$ are arbitrary formul\ae.

By \tnd\ we mean a principle that allows us to derive all formulas of the form  $$A\lor \neg A$$
 where $A$ is an arbitrary formula, and by \dne\ we mean a principle that allows us to derive all   formulas of the form $$\neg \neg A \imp A.$$

The formal system we feel represents minimal logic is the sequent calculus system for intuitionistic logic LJ (see for example \cite{Tak}) without the rule weakening:right. 
We will argue that this rule really represents the logical principle \exq, and apply a standard cut elimination procedure to this and a slightly modified version of this system to show what relations hold between the logical principles \tnd, \ex, and \dne.

\section{Minimal Logic}

Let $\ljm$ stand for the formal system obtained from the formal system for intuitionist logic LJ  
by removing the rule weakening:right. We are thinking in the context of Sequent Calculus.
\vspace{3mm}

\begin{lemma}
Over $\ljm$ it is equivalent to have $\oo \neg A \imp (A\imp B)$ as initial sequents for all formulas $A$ and $B$ and to have the rule weakening:right.
\end{lemma}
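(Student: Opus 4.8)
The plan is to prove the equivalence by establishing the two directions separately. The direction from the rule to the initial sequents is routine: assuming weakening:right is available, I would derive each instance $\follows \neg A \imp (A \imp B)$ as follows. Start from the initial sequent $A \follows A$ and apply $\neg$:left to obtain the empty-succedent sequent $A, \neg A \follows$; then weakening:right yields $A, \neg A \follows B$, and two applications of $\imp$:right give first $\neg A \follows A \imp B$ and then $\follows \neg A \imp (A \imp B)$. Nothing here is delicate, since weakening:right is exactly the step that lets us populate the empty succedent with the arbitrary $B$.

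The interesting direction is to show that, over $\ljm$ enriched with the initial sequents $\follows \neg A \imp (A \imp B)$ for all $A$ and $B$, the rule weakening:right is derivable; that is, from an arbitrary derivation of $\Gamma \follows$ I must produce $\Gamma \follows B$ for any $B$, uniformly in $\Gamma$ and $B$. The observation to exploit is that weakening:\emph{left} survives in $\ljm$ (only weakening:right was removed), so from $\Gamma \follows$ I can reach $\Gamma, C \follows$ for any $C$ and then apply $\neg$:right to obtain $\Gamma \follows \neg C$. Hence every \emph{negated} formula is derivable in the succedent from an empty succedent, even though a bare formula is not --- which is precisely what we lose without weakening:right.

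The main obstacle is therefore to feed the scheme $\neg A \imp (A \imp B)$ only with premises I can actually obtain. The trick is to use the instance whose outer ``$A$''-slot is itself a negation, say $P := \neg A$, so that the instance reads $\follows \neg\neg A \imp (\neg A \imp B)$ and \emph{both} antecedents it demands are negations: I get $\Gamma \follows \neg\neg A$ (taking $C := \neg A$ above) and $\Gamma \follows \neg A$ (taking $C := A$). Two applications of modus ponens then yield $\Gamma \follows B$. Modus ponens is available here via $\imp$:left together with cut (or, staying closer to cut-free form, by $\imp$:left against the axioms $\neg\neg A \follows \neg\neg A$, $\neg A \follows \neg A$, $B \follows B$, cutting in the two derivations above, and contracting the duplicated $\Gamma$). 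The only point worth recording is that this argument uses cut; since $\ljm$ admits cut elimination, this costs nothing, and the rule weakening:right comes out not merely admissible but derivable, uniformly in $\Gamma$ and $B$.
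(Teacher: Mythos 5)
Your proposal is correct, and in the nontrivial direction it runs on a genuinely different trick than the paper's. The paper simulates weakening:right for an antecedent consisting of a single formula $A$ by pairing that formula with its own negation: from the premise $A \oo$ it derives $A \oo \neg A$, combines this with the axiom $A \oo A$ to get $A \oo A \land \neg A$, separately derives $\neg A, A \oo B$ by cutting the initial sequent $\oo \neg A \imp (A \imp B)$ against an $\imp$:left tree, packages that as $A \land \neg A \oo B$ via contraction, and finishes with a second cut. Your argument instead instantiates the scheme at a negated formula, $\oo \neg\neg A \imp (\neg A \imp B)$, and exploits the fact that from $\Gamma \oo$ every \emph{negated} formula is obtainable on the right by weakening:left followed by $\neg$:right; two rounds of $\imp$:left plus cut, and a contraction of the duplicated $\Gamma$, then give $\Gamma \oo B$. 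Both proofs use cut essentially, and the easy direction is identical in the two. What your version buys is uniformity: it works verbatim for an arbitrary context $\Gamma$, including the empty one, whereas the paper's displayed derivation is tied to a nonempty antecedent (it needs the axiom $A \oo A$ to supply the positive conjunct) and would itself need something like your double-negation instantiation to cover the general case. One small caveat: your closing remark that cut elimination for $\ljm$ makes the use of cut ``cost nothing'' is beside the point and slightly misleading --- cut elimination for $\ljm$ does not automatically extend to $\ljm$ augmented with the new initial sequents, since cuts against those sequents cannot be pushed upward; but nothing in the lemma requires a cut-free simulation, as cut is a rule of the system, exactly as in the paper's own proof.
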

\begin{proof}
If we add weakening:right to $\ljm$ we can prove  ex-falso quodlibet in the following straightforward way:

\begin{prooftree}
\AXC{$A\oo A$}
\UIC{$\neg A, A\oo$}\RightLabel{w:r}
\UIC{$\neg A, A\oo B$}
\UIC{$A, \neg A\oo B$}
\UIC{$\neg A\oo A\imp B$}
\UIC{$\oo\neg A\imp(A\imp B)$}
\end{prooftree}

Perhaps a bit less obvious is the fact that with this form of ex-falso quodlibet $\ljm$ proves \emph{Weakening-right}:

{\colour{blue}
\begin{prooftree}
\AXC{$A\oo A$}
\AXC{$A\oo $}
\UIC{$\oo \neg A$}
\UIC{$A\oo\neg A$}
\BIC{$A\oo A\land \neg A$}
\AXC{$\oo \neg A\imp(A\imp B)$}
\AXC{$\neg A\oo\neg A$}
\AXC{$A\oo A$}
\AXC{$B\oo B$}
\BIC{$A\imp B, A\oo B$} 
\BIC{$\neg A\imp (A\imp B), \neg A, A\oo B$}\RightLabel{Cut}
\BIC{$\neg A, A\oo B$}
\UIC{$A\land \neg A, A\oo B$}
\UIC{$A, A\land \neg A\oo B$}
\UIC{$A\land \neg A, A\land \neg A\oo B$}
\UIC{$A\land \neg A\oo B$}\RightLabel{Cut}
\BIC{$A\oo B$}
\end{prooftree}
}

\end{proof}

\begin{lemma}
The following hold on the basis of minimal logic:
\begin{enumerate}
\item Double negation elimination implies tertium non datur and ex falso.
\item Tertium non datur + ex falso imply double negation elimination.
\end{enumerate}
\end{lemma}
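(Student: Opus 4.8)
The plan is to establish both directions by exhibiting explicit $\ljm$-derivations that introduce the assumed principles as initial sequents (or cut them in) and then reshape the succedent using the negation, disjunction and cut rules that $\ljm$ retains. Throughout I will lean on Lemma~1, which lets me treat \ex\ interchangeably as the family of initial sequents $\oo \neg A \imp (A \imp B)$ and as the rule weakening:right; the second guise is the more convenient one inside the calculus, since the only thing $\ljm$ lacks relative to LJ is precisely the ability to inflate an empty succedent.

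For item~1 I would treat the two consequences separately. To obtain \ex\ from \dne, the key observation is that $\ljm$ already proves $\neg A, A \oo \neg\neg B$: from the initial sequent $A \oo A$ one $\neg$:left step yields $\neg A, A \oo$ with empty succedent, weakening:left adjoins $\neg B$, and $\neg$:right closes it off to $\neg A, A \oo \neg\neg B$. A single cut against the \dne-instance $\neg\neg B \oo B$ (obtained from $\oo \neg\neg B \imp B$ by one $\imp$:left and a cut) gives $\neg A, A \oo B$, whence two $\imp$:right steps produce $\oo \neg A \imp (A \imp B)$. This makes transparent why the implication is not trivial: \dne\ is exactly what refills the empty succedent that $\ljm$ cannot fill on its own. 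To obtain \tnd\ from \dne, I would first derive the double negation $\oo \neg\neg(A \lor \neg A)$ purely in $\ljm$: writing $C$ for $A \lor \neg A$, one shows $\neg C \oo \neg A$ and $\neg A \oo C$ by $\lor$:right steps and a cut, chains them to $\neg C \oo C$, cuts against $\neg C, C \oo$, and contracts to $\neg C \oo$, so that $\neg$:right yields $\oo \neg\neg C$. Cutting this against the \dne-instance $\neg\neg C \oo C$ delivers $\oo A \lor \neg A$.

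For item~2 I would split on the \tnd-instance $\oo A \lor \neg A$ using $\lor$:left, reducing the goal $\neg\neg A \oo A$ to the two premises $A, \neg\neg A \oo A$ and $\neg A, \neg\neg A \oo A$ and then cutting. The first premise is immediate from $A \oo A$ by weakening:left. The second is where \ex\ earns its keep: one $\neg$:left step from $\neg A \oo \neg A$ gives $\neg\neg A, \neg A \oo$ with empty succedent, and here I invoke \ex\ in its weakening:right form to pass to $\neg A, \neg\neg A \oo A$. Assembling the two premises under $\lor$:left gives $A \lor \neg A, \neg\neg A \oo A$, and a cut with \tnd\ followed by $\imp$:right yields $\oo \neg\neg A \imp A$.

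The main obstacle, and the point I would take most care over, is the minimal-logic derivability of $\oo \neg\neg(A \lor \neg A)$ in item~1: it is the one step that cannot be read off from a single rule and instead requires the right sequence of $\lor$:right introductions, cuts and a contraction, all while respecting the empty-succedent discipline of $\ljm$. Everywhere else the argument is bookkeeping, and the conceptual content is simply the recurring observation that \ex\ (equivalently weakening:right) is exactly the device that turns a refutation $\Gamma \oo$ into the arbitrary conclusion $\Gamma \oo B$---used overtly in the $\neg A$ branch of item~2, and supplied by \dne\ through the $\neg\neg B$ detour in item~1.
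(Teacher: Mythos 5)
Your proposal is correct and follows essentially the same route as the paper: the same $\ljm$-derivation of $\neg A, A \oo \neg\neg B$ cut against the \dne-instance for \ex, the derivation of $\oo \neg\neg(A \lor \neg A)$ inside $\ljm$ cut against \dne\ for \tnd, and the $\lor$:left case split with weakening:right (via Lemma~1) standing in for \ex\ in item~2. The only differences are cosmetic, such as the order in which the cut and $\imp$:left steps are performed.
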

\begin{proof}

\begin{enumerate}

\item
The following is a derivation in minimal logic which shows how to get $A\lor \neg A$ from $\neg \neg (A\lor \neg A)\imp (A\lor \neg A)$:

\includegraphics[ bb = 155 545 350 712]{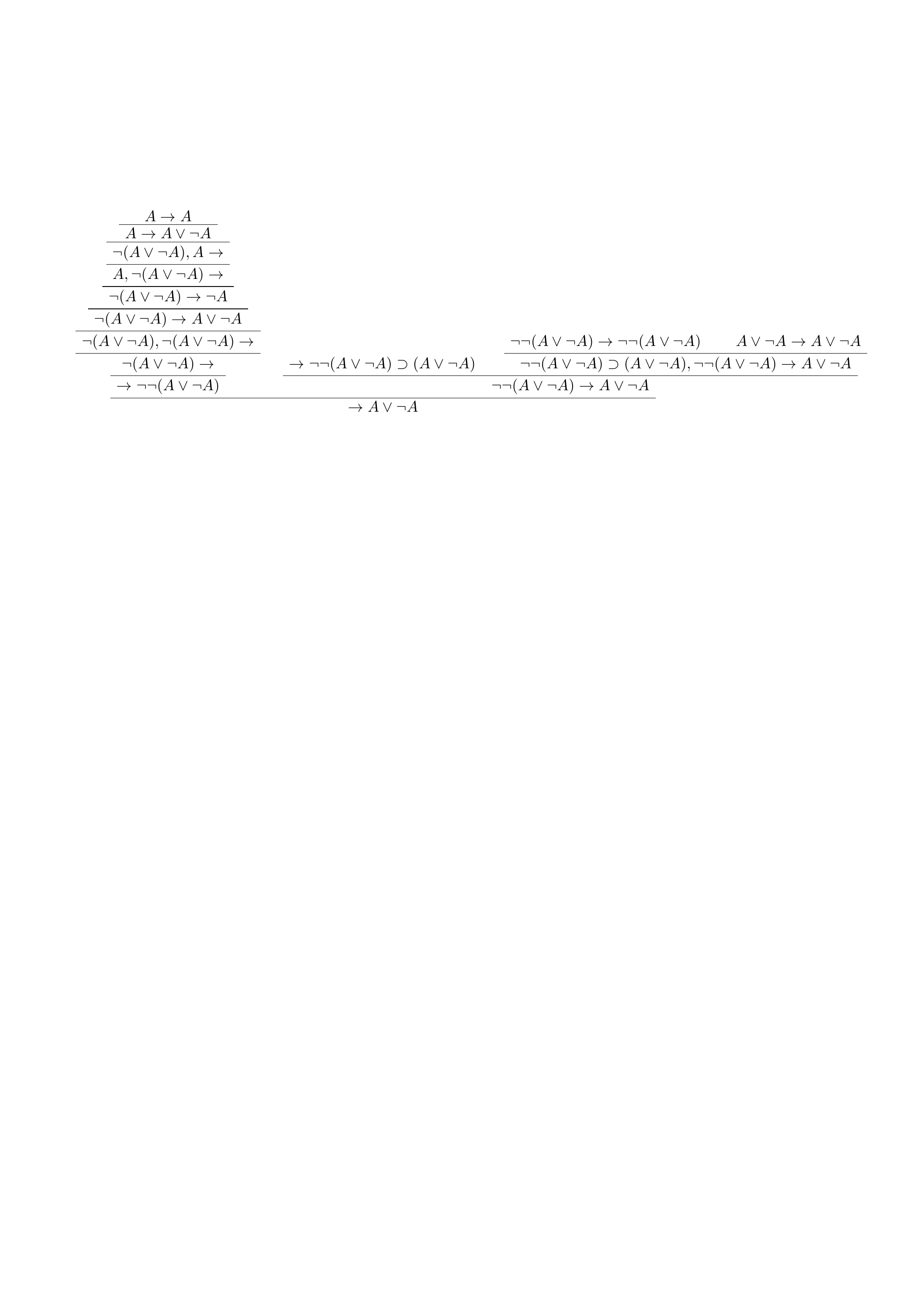}

And the next one shows that double negation elimination proves ex falso

\begin{prooftree}
\AXC{$\oo\neg\neg B\imp B$}
\AXC{$A\oo A$}
\UIC{$\neg A, A\oo$}   
\UIC{$\neg B, \neg A, A\oo$}
\UIC{$ \neg A, A\oo\neg\neg B$}
\AXC{$B\oo B$}
\BIC{$\neg\neg B\imp B,  \neg A, A\oo B$}
\BIC{$\neg A, A\oo B$}
\UIC{$A, \neg A\oo B$}
\UIC{$\neg A\oo A\imp B$}
\UIC{$\oo \neg A\imp(A\imp B)$}
\end{prooftree}

\item 
To see that with ex falso and tertium non datur we can obtain double negation elimination we use the following derivation:

\begin{prooftree}
\AXC{$\oo A\lor \neg A$}
\AXC{$A\oo A$}
\UIC{$\neg\neg A, A\oo A$}
\UIC{$A, \neg\neg A\oo A$}
\AXC{$\neg A\oo\neg A$}
\UIC{$\neg\neg A, \neg A\oo$}
\UIC{$\neg A,\neg\neg A\oo$}\RightLabel{{\colour{blue}{w:r}}} %{{\colour{blue}{W:R}}}
\UIC{$\neg A,\neg\neg A\oo A$}
\BIC{$A\lor\neg A, \neg\neg A\oo A$}
\BIC{$\neg\neg A\oo A$}
\end{prooftree}

\end{enumerate}

\end{proof}

\subsection{Minimal Logic and Tertium non datur}

The way we choose to include \tnd\ in $\ljm$ to get $\ljmm$ is by adding to $\ljm$ the following inference rule:

\begin{prooftree}
\AXC{$A, \Gamma\oo \Delta$}
\AXC{$\neg A, \Gamma\oo \Delta$}
\BIC{$\Gamma\oo\Delta$}
\end{prooftree}

It is easy to see that with this rule we can derive $A\lor \neg A$ for any formula $A$:

\begin{prooftree}
\AXC{$A\oo A$}
\UIC{$A\oo A\lor\neg A$}
\AXC{$\neg A\oo\neg A$}
\UIC{$\neg A\oo A\lor\neg A$}
\BIC{$\oo A\lor\neg A$}
\end{prooftree}

It is also easy to see that this rule is perfectly compatible with LK because $A\lor\neg A$ is always derivable in that system:

\begin{prooftree}
\AXC{$A\oo A$}
\UIC{$A\oo A\lor\neg A$}
\UIC{$\oo A\lor\neg A, \neg A$}
\UIC{$\oo A\lor\neg A, A\lor \neg A$}
\UIC{$\oo A\lor\neg A$}
\AXC{$A, \Gamma\oo \Delta$}
\AXC{$\neg A, \Gamma\oo \Delta$}
\BIC{$A\lor \neg A, \Gamma\oo \Delta$}
\BIC{$\Gamma\oo \Delta$}
\end{prooftree}

So we are somewhat justified in claiming this to be an addition of \tnd\ to $\ljm$ to get $\ljmm$.

\section{Cut Elimination}

We will do what Girard does in his book Proofs and Types \cite{Gir} to prove cut elimination for $\ljm$ and $\ljmm$:

\subsection{Reductions}

\begin{prooftree}
\AXC{$ A, \Gamma\oo$}
\UIC{$\Gamma\oo \neg A$}
\AXC{$\Pi\oo A$}
\UIC{$\neg A, \Pi\oo$}
\BIC{$\Gamma, \Pi\oo$}
\end{prooftree}

changes to

\begin{prooftree}
\AXC{$\Pi\oo A$}
\AXC{$ A, \Gamma\oo$}
\BIC{$\Gamma, \Pi\oo$}
\end{prooftree}

\begin{prooftree}
\AXC{$\Gamma\oo A$}
\AXC{$\Gamma\oo B$}
\BIC{$\Gamma\oo A \land B$}
\AXC{$B, \Pi\oo\Lambda$}
\UIC{$A\land B, \Pi \oo \Lambda$}
\BIC{$\Gamma, \Pi\oo \Lambda$}
\end{prooftree}

changes to 

\begin{prooftree}
\AXC{$\Gamma\oo B$}
\AXC{$B, \Pi\oo\Lambda$}
\BIC{$\Gamma, \Pi\oo \Lambda$}
\end{prooftree}

\begin{prooftree}
\AXC{$\Gamma\oo B$} 
\UIC{$\Gamma\oo A \lor B$}
\AXC{$A, \Pi\oo \Lambda$}
\AXC{$B, \Pi\oo \Lambda$}
\BIC{$A \lor B, \Pi \oo \Lambda$}
\BIC{$\Gamma, \Pi\oo \Lambda$}
\end{prooftree}

changes to

\begin{prooftree}
\AXC{$\Gamma\oo B$} 
\AXC{$B, \Pi\oo \Lambda$}
\BIC{$\Gamma, \Pi\oo \Lambda$}
\end{prooftree}

\begin{prooftree}
\AXC{$A, \Gamma\oo B$}
\UIC{$\Gamma\oo A\imp B$}
\AXC{$\Pi\oo A$}
\AXC{$B\oo \Lambda$}
\BIC{$A\imp B, \Pi\oo \Lambda $}
\BIC{$\Gamma, \Pi\oo \Lambda$}
\end{prooftree}

changes to

\begin{prooftree}
\AXC{$\Pi\oo A$}
\AXC{$A, \Gamma\oo B$}
\BIC{$\Pi, \Gamma\oo B$}
\AXC{$B\oo \Lambda$}
\BIC{$\Pi, \Gamma \oo \Lambda$}
\end{prooftree}

\begin{prooftree}
\AXC{$\Gamma\oo A(a)$}
\UIC{$\Gamma\oo \forall x A(x)$}
\AXC{$A(t), \Pi\oo \Lambda$}
\UIC{$\forall x A(x), \Pi\oo\Lambda$}
\BIC{$\Gamma, \Pi\oo \Lambda$}
\end{prooftree}

changes to

\begin{prooftree}
\AXC{$\Gamma\oo A(t)$}
\AXC{$A(t), \Pi\oo \Lambda$}
\BIC{$\Gamma, \Pi\oo \Lambda$}
\end{prooftree}

\begin{prooftree}
\AXC{$\Gamma\oo A(t)$}
\UIC{$\Gamma\oo \exists x A(x)$}
\AXC{$A(a), \Pi\oo \Lambda$}
\UIC{$\exists x A(x), \Pi \oo \Lambda$}
\BIC{$\Gamma, \Pi\oo \Lambda$}
\end{prooftree}

changes to

\begin{prooftree}
\AXC{$\Gamma\oo A(t)$}
\AXC{$A(t), \Pi\oo \Lambda$}
\BIC{$\Gamma, \Pi\oo \Lambda$}
\end{prooftree}

The degree $\partial (A)$ of a \emph{formula} is defined as:

\begin{itemize}
\item $\partial (A)  = 1$ for $A$ atomic
\item $\partial (A\land B) = \partial (A \lor B) = \partial (A \imp B) =\rm{max}(\partial (A), \partial (B)) + 1 $
\item $\partial (\neg A) = \partial (\forall x A(x)) = \partial (\exists x A(x)) = \partial (A) + 1$
\end{itemize}

The \emph{degree} of a \emph{cut rule} is defined to be the degree of the formula which it cuts.

The \emph{degree} $d(\pi)$ of a \emph{proof} is the least upper bound  (l.u.b.) of the set of degrees of its cut rules. So $d(\pi)=0$ iff $\pi$ is cut-free.

The height of a proof is 
\begin{itemize}
\item $h(\pi)=1$ if $\pi$ is an axiom
\item $h(\pi)={\rm{l.u.b.}}(h(\pi_1),h(\pi_2))+1$ if $\pi$ is a proof with a binary last inference rule whose premisses are proved by the subproofs $\pi_1$ and $\pi_2$.
\item $h(\pi)=h(\pi_0)+1$ if $\pi$ is a proof with a unary last inference rule (eg weakening left) and the subproof of the premiss of this inference rule is $\pi_0$.
\end{itemize}

{\textbf{Notation}} If $\Pi$ is a sequence of formulas then by $\Pi-C$ we mean $\Pi$ where an \emph{arbitrary} number of ocurrences of $C$ have been deleted.

 \begin{lemma}
 Let $C$ be a formula of degree $n$, and $\pi, \pi'$ be minimal logic proofs of $\Gamma\oo C$ and $\Pi\oo\Lambda$ of degrees $<n$. Then there is a minimal logic proof of degree $<n$ of 
 $\Gamma, \Pi-C\oo\Lambda$.
 \end{lemma}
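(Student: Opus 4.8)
The plan is to keep $n$ fixed and argue by induction on the sum of heights $h(\pi)+h(\pi')$, with a case analysis on the last rules of the two given proofs. Write $r$ for the last rule of $\pi$ (proving $\Gamma\oo C$) and $r'$ for the last rule of $\pi'$ (proving $\Pi\oo\Lambda$). A useful structural observation at the outset is that, because $\ljm$ has no weakening:right, the succedent $C$ of $\pi$ can only have arisen either from a right-introduction of the outermost connective of $C$, or by being carried along as a side formula through a left or structural rule; this dichotomy is what organises the whole argument.

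First I would dispose of the base cases in which $\pi$ or $\pi'$ is an axiom, together with the degenerate case where $C$ does not occur in $\Pi$ at all: here the desired sequent $\Gamma,\Pi-C\oo\Lambda$ is obtained simply by left-weakening $\pi'$ with $\Gamma$, deleting no occurrences of $C$ (left weakening and contraction remain available in $\ljm$, only weakening:right having been dropped). If instead $\pi$ is the axiom $C\oo C$, then $\Gamma=C$ and the result is again $\pi'$ weakened by $\Gamma$.

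Next come the permutation cases, which are the ones that consume the induction hypothesis. If $C$ is not the principal formula of $r'$, I would apply the induction hypothesis to $\pi$ against each premiss of $r'$ (each of strictly smaller height) and then reapply $r'$; when $r'$ is binary this produces two copies of $\Gamma$, which a left-contraction removes, and for the quantifier rules the eigenvariable condition is preserved. Left-contraction of $\pi'$ on an occurrence of $C$ is subsumed here: its premiss carries the extra copy of $C$, and since $\Pi-C$ permits deleting an arbitrary number of occurrences, the induction hypothesis simply absorbs the surplus copy. This is precisely the role of the notation $\Pi-C$. Symmetrically, if $C$ is principal in $r'$ but $r$ is a left or structural rule (so $C$ is not principal in $\pi$), I would push the cut into the premiss(es) of $r$ and reapply $r$.

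The one case that does not recurse, and the heart of the matter, is when $C$ is principal in both $r$ and $r'$, so that $r$ is the right-introduction and $r'$ the matching left-introduction of the outermost connective of $C$. Here I would invoke exactly the corresponding reduction from the list above. The point to check is uniform across the connectives: each reduction replaces the single cut on $C$, of degree $n$, by one or two cuts on the immediate subformulas of $C$, all of degree $<n$ (since $\partial$ strictly drops when passing to an immediate subformula), applied to subproofs of $\pi$ and $\pi'$ that already have degree $<n$. Hence the reduced proof has degree $<n$ and proves $\Gamma,\Pi-C\oo\Lambda$ with the principal occurrence of $C$ deleted, and no induction hypothesis is needed. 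I expect the main obstacle to be bookkeeping rather than ideas: keeping track of which occurrences of $C$ are deleted so that every branch genuinely delivers a sequent of the exact shape $\Gamma,\Pi-C\oo\Lambda$, tidying the context duplications thrown up by the binary permutation cases, and verifying that the $\imp$ and $\land$ reductions, which install two successive lower-degree cuts, indeed keep the whole proof below degree $n$.
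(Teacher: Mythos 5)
Your overall strategy---induction on $h(\pi)+h(\pi')$, base cases by structural rules, permutation cases that consume the induction hypothesis, and reductions when $C$ is principal on both sides---is the same as the paper's. But there is a genuine gap at the one point where you depart from it: the principal--principal case, where you assert that ``no induction hypothesis is needed'' and that the raw reduction already yields $\Gamma, \Pi-C\oo\Lambda$. The lemma must be proved in the strong form in which \emph{any} designated set of occurrences of $C$ in $\Pi$ may be deleted (the weak reading, where one may choose to delete nothing, makes the lemma trivial by left weakening and useless for eliminating cuts). Your own treatment of contraction relies on exactly this strength: you let the induction hypothesis ``absorb the surplus copy,'' i.e.\ delete two occurrences at once. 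In the principal--principal case, however, the context $\Pi$ of the left-introduction rule $r'$ may itself contain further designated occurrences of $C$; the reduction removes only the principal occurrence and leaves the others untouched, so it proves $\Gamma,\Pi\oo\Lambda$ rather than $\Gamma,\Pi-C\oo\Lambda$. Nor can you repair this afterwards by invoking the induction hypothesis on the reduced proof, since its height is not controlled by $h(\pi)+h(\pi')$. As written, then, your induction invokes a stronger hypothesis than your principal case establishes, and the induction does not close.

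The paper closes this hole by using the induction hypothesis \emph{inside} the principal--principal case as well. In its $\exists$ example: first apply the induction hypothesis to $\pi$ and the premiss $A(a),\Pi\oo\Lambda$ of $r'$ (legitimate, since that premiss has smaller height), which deletes all remaining designated occurrences of $C$ from $\Pi$ and gives a degree $<n$ proof of $\Gamma, A(a), \Pi-C\oo\Lambda$; then substitute $t$ for $a$; then cut against the premiss $\Gamma\oo A(t)$ of $r$---a cut of degree $\partial(A(t))<n$---and finally contract the duplicated $\Gamma$. The same pattern (induction hypothesis against the premiss(es) of $r'$, then one or two cuts on immediate subformulas, then contraction) works uniformly for every connective, and replacing your reduction step by it makes your argument go through.
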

 \begin{proof}
 By induction on $h(\pi)+h(\pi')$.
 
 If both $\pi$ and $\pi'$ are proofs of height one, ie, axioms we are in the following situation:
 
 \begin{prooftree}
 \AXC{$C\oo C$}
 \AXC{$D\oo D$}\noLine
 \BIC{}
 \end{prooftree}
 
 If $D\neq C$ then 
 
 \begin{prooftree}
 \AXC{$D\oo D$}\RightLabel{w:l}
 \UIC{$C, D\oo D$}
 \end{prooftree}
 
 gives what we wanted.
 
 If $D=C$ then either
 
 \begin{prooftree}
 \AXC{$C\oo C$}
 \UIC{$C, C\oo C$}
 \end{prooftree}
 
 or
 
 \begin{prooftree}
 \AXC{$C\oo C$}
 \end{prooftree}
 
 gives what we wanted.
 
 If $\pi$ is an axiom and $\pi'$ isn't then the situation is the following
 
 \begin{prooftree}
 \AXC{$C\oo C$}
 \AXC{$\pi'\upharpoonright$}
 \UIC{$\Pi\oo \Lambda$}\noLine
 \BIC{}
 \end{prooftree}
 
 and we want to conclude $C,\Pi-C\oo\Lambda$ which we can do by weakening from $\Pi\oo\Lambda$.
 
 If $\pi'$ is an axiom then the situation is
 
 \begin{prooftree}
 \AXC{$\pi\upharpoonright$}
 \UIC{$\Gamma\oo C$}
 \AXC{$D\oo D$}\noLine
 \BIC{}
 \end{prooftree}
 
 and again if $D\neq C$ then we get what we want from $D\oo D$ by weakening up to $\Gamma, D\oo D$.
 
 If $D=C$ then $\Gamma\oo C$ or $\Gamma, C\oo C$ give us what we want.
 
 If neither $\pi$ nor $\pi'$ are axioms then either both $\pi$ and $\pi'$ introduce $C$ in their last inference rule, or one of them doesn't. 
 
 If $\pi$ doesn't, and in particular this includes the rule
 
 \begin{prooftree}
 \AXC{$A, \Gamma\oo C$}
 \AXC{$\neg A, \Gamma \oo C $}
 \BIC{$\Gamma\oo C$}
 \end{prooftree}
 
 then, we use the induction hypothesis with the premiss(es) of the last inference rule of $\pi$ and $\Pi\oo\Lambda$, and then use the last inference rule of $\pi$. For example:
 
 \begin{prooftree}
 \AXC{$A, \Gamma\oo C$}
 \AXC{$\Pi\oo\Lambda$}\dashedLine
 \BIC{$A, \Gamma, \Pi-C\oo\Lambda$}
 \AXC{$\neg A, \Gamma\oo C$}
 \AXC{$\Pi\oo\Lambda$}\dashedLine
 \BIC{$\neg A, \Gamma, \Pi-C\oo \Lambda$}
 \BIC{$\Gamma,\Pi-C\oo\Lambda$}
 \end{prooftree}
 
 Now if both $\pi$ and $\pi'$ introduce $C$ in their last inferences then we should look at the reductions above. For example:
 
 If we had
 
 \begin{prooftree}
 \AXC{$\Gamma\oo A(t)$}
 \UIC{$\Gamma\oo \exists x A(x)$}
 \AXC{$A(a), \Pi\oo \Lambda$}
 \UIC{$\exists x A(x), \Pi \oo \Lambda$}
 \BIC{$\Gamma, \Pi\oo \Lambda$}
 \end{prooftree}

 then by induction hypothesis we would have a proof of degree $<n$ of $\Gamma, A(a), \Pi-C\oo \Lambda$. Then, by the substitution lemma, we would get (merely by substituting) $\Gamma, A(t), \Pi-C\oo\Lambda$. So things look like this:
 
 \begin{prooftree}
 \AXC{$\Gamma\oo A(t)$}
 \AXC{$\Gamma, A(t), \Pi-C\oo\Lambda$}\doubleLine
 \UIC{$A(t), \Gamma, \Pi-C\oo \Lambda$}
 \BIC{$\Gamma, \Gamma, \Pi-C\oo\Lambda$}
 \end{prooftree}

 \end{proof}

 \section{Main Results}
 
\begin{lemma}
Let $B$ be an atomic formula. If the sequent $\Gamma\oo B$ is provable in $\ljmm$ then there is a formula $F$ which has $B$ as a subformula not occurring in the scope of a negation such that $F\in \Gamma$.
\end{lemma}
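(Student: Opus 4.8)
The plan is to reduce to cut-free proofs and then induct on proof height. The preceding lemma takes a single cut of degree $n$ whose subproofs have degree $<n$ and returns a proof of degree $<n$; feeding this into the standard induction on the degree of the proof and the number of topmost cuts of maximal degree, as in \cite{Gir}, yields full cut elimination for $\ljmm$ (the proof of that lemma already treats the \tnd\ rule, so nothing new is needed). Hence I may assume a cut-free proof $\pi$ of $\Gamma\oo B$ with $B$ atomic, and I will prove by induction on the height of $\pi$ the invariant that some $F\in\Gamma$ carries an occurrence of $B$ as a subformula not within the scope of a negation.

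A single structural observation does most of the work. Since $B$ is atomic, the last inference of $\pi$ cannot be any right rule (a right rule introduces the outermost connective of the succedent, and an atom has none; note $\oo\neg$ even leaves an empty-succedent premise), nor can it be $\neg$:left (whose conclusion has empty succedent). Here it is essential that $\ljmm$ lacks weakening:right: by Lemma 1 that rule is exactly \ex, and it is precisely its absence that forbids conjuring the succedent $B$ out of nothing and forces it to descend from an axiom $B\oo B$. So the last inference is an axiom, a left structural rule, a left rule for $\land,\lor,\imp,\forall$ or $\exists$, or the \tnd\ rule.

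The easy cases are then routine. The axiom $B\oo B$ is handled by $F=B$. For weakening:left, contraction:left and exchange:left the succedent is unchanged and the witness from the induction hypothesis already lies in the conclusion's antecedent. For $\land$:left, $\lor$:left, $\forall$:left and $\exists$:left I apply the hypothesis to a premise (for $\lor$:left, to either one) of the form $\Gamma'\oo B$: if its witness lies in the shared context I am done, and otherwise the witness is the active subformula, an immediate subformula of the principal formula through one of $\land,\lor,\forall,\exists$ --- none a negation --- so the occurrence of $B$ stays outside the scope of any negation and lifts to the principal formula, which is in $\Gamma$. For $\imp$:left $\frac{\Pi\oo D\quad E\oo B}{D\imp E,\Pi\oo B}$ I invoke the hypothesis only on the right premise $E\oo B$ (the left premise need not have an atomic succedent); its witness must be $E$, and since the consequent $E$ of $D\imp E$ lies outside the scope of a negation, the occurrence lifts to $D\imp E\in\Gamma$. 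For the quantifier rules I would use the substitution lemma: substituting a term for the bound variable leaves the propositional skeleton, hence the set of occurrences not within the scope of a negation, intact, and the eigenvariable condition for $\exists$:left ensures the witnessing atom is literally $B$.

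The main obstacle, and the heart of the argument, is the \tnd\ rule
\[
\frac{A,\Gamma\oo B \qquad \neg A,\Gamma\oo B}{\Gamma\oo B}.
\]
Applying the induction hypothesis to the right premise yields a witness in $\{\neg A\}\cup\Gamma$. The decisive point is that $\neg A$ itself can never be that witness: it is not atomic, so $\neg A\neq B$, and every occurrence of the atom $B$ inside $\neg A$ sits within $A$, that is, within the scope of the leading negation. The witness must therefore already belong to $\Gamma$, which is exactly the conclusion's antecedent, completing the induction. This same observation --- that an atom never occurs outside a negation in a negated formula --- is what will let the lemma separate \ex\ from \tnd, since it shows the ex-falso sequent $\neg A,A\oo B$ with $A,B$ distinct atoms to be unprovable in $\ljmm$.
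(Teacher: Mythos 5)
Your proposal is correct and follows essentially the same route as the paper: reduce to a cut-free proof via the cut-elimination lemma, induct on the height, and observe in the \tnd\ case that the witness from the premise $\neg A,\Gamma\oo B$ cannot be $\neg A$ (since $B$ is atomic and every proper subformula of $\neg A$ lies under its head negation), so it already lies in $\Gamma$. The only difference is one of thoroughness: you spell out the exclusion of right rules and the lifting of the witness through the left logical rules, which the paper dismisses with ``the other cases seem easy to verify.''
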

\begin{proof}
By induction on the height of a cut-free proof of $\Gamma\oo B$.
If $\Gamma\oo B$ is an axiom then it is the axiom $B\oo B$. In this case $F=B$ works because $B$ is a subformula of $F$ and it does not appear in the scope of a negation ($F$ has no negation symbols at all).

Let $\pi$ be a proof of $\Gamma\oo B$:

If the last inference rule of $\pi$ is weakening left

\begin{prooftree}
\AXC{$\Gamma'\oo B$}
\UIC{$D, \Gamma'\oo B$}
\end{prooftree}

then by induction hypothesis we have a formula $F$ in $\Gamma'$ in which $B$ appears as a subformula and not within the scope of a negation. This same formula $F$ appears in the sequence $D, \Gamma'$.

If the last inference rule of $\pi$ was contraction left or exchange left we observe that it is very similar to the first case (weakening left).

If the last rule was our new rule

\begin{prooftree}
\AXC{$D, \Gamma\oo B$}
\AXC{$\neg D, \Gamma\oo B$}
\BIC{$\Gamma\oo B$}
\end{prooftree}

then we note that by induction hypothesis, in $\neg D, \Gamma \oo B$ we already have the necessary formula $F$; moreover, this formula cannot be $\neg D$ because since $B$ is atomic it cannot be equal to $\neg D$ and it also cannot appear as a subformula of $\neg D$ not inside the scope of a negation, because every proper subformula of $\neg D$ is inside the scope of the head of $\neg D$ which is a negation. This means that the formula $F$ is already in $\Gamma$.

The other cases seem easy to verify since we can't use any negation rules if we have the atomic formula $B$ on the right hand side of the sequent, and the other inference rules won't add a negation to any of the formulas in their premisses.

Therefore the statement follows.
\end{proof}

\begin{cor}
If $A$ and $B$ are atomic formulas then 
$$\ljmm\nvdash\neg A, A\oo B$$
\end{cor}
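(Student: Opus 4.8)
The plan is to derive the statement directly from the preceding Lemma. Suppose, for contradiction, that $\ljmm\vdash\neg A, A\oo B$. Taking $\Gamma = \neg A, A$ and recalling that $B$ is atomic, the Lemma hands us a formula $F\in\{\neg A, A\}$ in which $B$ occurs as a subformula not lying in the scope of any negation. The whole argument then reduces to checking that neither member of $\Gamma$ can play the role of such an $F$.

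First I would dispose of the case $F=\neg A$. Since $\neg A$ is a negation it is not atomic, so $\neg A\neq B$; and its only proper subformula, namely $A$, sits inside the scope of the leading negation. Hence $B$ cannot occur in $\neg A$ outside the scope of a negation. This is exactly the observation already exploited in the proof of the Lemma for the new rule, so it demands nothing new.

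Next I would treat the case $F=A$. Because $A$ is atomic, its only subformula is $A$ itself, so the requirement that $B$ be a subformula of $A$ forces $B=A$. Here the one genuine hypothesis enters: provided $A$ and $B$ are \emph{distinct} atoms, this is impossible, and we again reach a contradiction. (If instead $A=B$, the sequent $\neg A, A\oo A$ is in fact derivable by weakening:left from the axiom $A\oo A$; so distinctness of the two atoms is precisely what the statement requires, and I would either assume it outright or read the two metavariables as denoting different atomic formulas.)

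Both cases being contradictory, no admissible $F$ exists, contradicting the Lemma; hence $\ljmm\nvdash\neg A, A\oo B$. The only step carrying real content is the case $F=\neg A$, i.e.\ confirming that an atomic $B$ cannot surface outside all negations within $\neg A$; but since this duplicates the reasoning already in the Lemma, the corollary is essentially immediate, the main subtlety being the distinctness caveat just flagged.
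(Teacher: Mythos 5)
Your proof is correct and is precisely the argument the paper intends: the corollary is stated without proof as an immediate consequence of the preceding Lemma, and your case analysis on $F\in\{\neg A, A\}$ (with $F=\neg A$ ruled out because every proper subformula of $\neg A$ lies in the scope of its head negation, and $F=A$ ruled out by atomicity) is exactly the natural way to spell that out. Your caveat is also well taken: since $\neg A, A\oo A$ is derivable by weakening:left from the axiom $A\oo A$, the statement does implicitly require $A$ and $B$ to be distinct atoms, a hypothesis the paper omits but which is harmless for its intended use, namely refuting \ex\ and weakening:right in $\ljmm$.
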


\begin{cor}
$\ljmm$ does not prove weakening:right.
\end{cor}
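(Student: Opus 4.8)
The plan is to argue by contradiction, leaning on the immediately preceding corollary, which tells us that for atomic $A$ and $B$ the sequent $\neg A, A\oo B$ is \emph{not} provable in $\ljmm$. Accordingly I would suppose that $\ljmm$ does prove weakening:right --- that is, that the rule is admissible in $\ljmm$, so that provability of a premiss of the form $\Gamma\oo$ yields provability of $\Gamma\oo C$ for an arbitrary formula $C$ --- and then manufacture a sequent that the previous corollary forbids.

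First I would pin down a concrete provable premiss to feed into weakening:right. Starting from the axiom $A\oo A$ and applying $\neg$:left, a rule already present in $\ljm$ and hence in $\ljmm$, I obtain $\neg A, A\oo$, exactly as in the first half of Lemma 1. This one-step derivation uses nothing beyond $\ljmm$ itself, so $\neg A, A\oo$ is genuinely provable in $\ljmm$ for every formula $A$, in particular for atomic $A$.

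Next I would apply the supposedly admissible rule weakening:right to $\neg A, A\oo$, taking an atomic $B$ as the new succedent formula, which produces a $\ljmm$-proof of $\neg A, A\oo B$ with both $A$ and $B$ atomic. This contradicts the preceding corollary directly, and the contradiction shows that weakening:right cannot be admissible in $\ljmm$.

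I expect the only real subtlety --- rather than a genuine obstacle --- to be fixing the precise sense in which $\ljmm$ might ``prove'' a rule. The argument treats weakening:right in its weakest reasonable reading, as an admissible rule, and ruling out even this case settles the claim \emph{a fortiori} for the stronger readings (derivable or primitive rule). Everything else is immediate: there is no induction or cut elimination left to carry out here, since all the work has already been done by the subformula lemma and its first corollary, and the passage from that corollary to this one is just the trivial derivation of $\neg A, A\oo$ followed by a single hypothetical use of the rule in question.
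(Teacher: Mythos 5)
Your proof is correct and matches the paper's intended argument: the corollary is stated immediately after the one asserting $\ljmm\nvdash\neg A, A\oo B$ for atomic $A$, $B$, precisely so that it follows by observing that $\neg A, A\oo$ is $\ljmm$-provable and that an admissible weakening:right would then yield the forbidden sequent. Your added care about reading ``proves the rule'' as admissibility (the weakest sense) is a reasonable sharpening, not a departure.
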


\begin{cor}
$\ljmm$ does not prove ex falso quodlibet.
\end{cor}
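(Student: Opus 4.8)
The plan is to argue by contradiction and reduce the non-provability of \ex\ to the non-provability of weakening:right that the immediately preceding corollary has just established. So I would begin by supposing, for contradiction, that $\ljmm$ proves \exq; that is, that $\ljmm$ derives every initial sequent of the form $\oo\neg A\imp(A\imp B)$.

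The key step is to invoke the first Lemma of this paper, the one showing that over $\ljm$ the initial sequents $\oo\neg A\imp(A\imp B)$ are \emph{equivalent} to the rule weakening:right. What I would stress is that this equivalence transfers unchanged to $\ljmm$: the two derivations witnessing it use only rules already available in $\ljm$ (together with cut), and since $\ljmm$ is obtained from $\ljm$ merely by \emph{adding} the \tnd-rule, every $\ljm$-derivation is in particular an $\ljmm$-derivation, so $\ljmm\supseteq\ljm$ as proof systems. I would then take specifically the direction of that Lemma which builds weakening:right out of the initial sequents $\oo\neg A\imp(A\imp B)$, and plug the assumed $\ljmm$-proofs of these sequents into its leaves. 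The result is an $\ljmm$-derivation of the conclusion of weakening:right from its premiss, for arbitrary $\Gamma$ and $B$; in other words, $\ljmm$ would prove weakening:right.

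This contradicts the preceding corollary, which asserts exactly that $\ljmm$ does not prove weakening:right, so the assumption must fail and $\ljmm$ cannot prove \exq. As a more self-contained alternative I would instead contradict the first corollary: from an assumed $\ljmm$-proof of $\oo\neg A\imp(A\imp B)$ with $A,B$ distinct atoms, two applications of $\imp$:left against the axioms $A\oo A$, $B\oo B$ and $\neg A\oo\neg A$ followed by one cut produce $\neg A,A\oo B$, a sequent the first corollary forbids.

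I do not expect a serious obstacle here, since the substantive work has already been done in the main Lemma and its corollaries. The only point genuinely requiring care is the transfer of the equivalence Lemma from $\ljm$ to $\ljmm$, and the matching of the informal phrase ``$\ljmm$ proves \ex'' with ``$\ljmm$ has all sequents $\oo\neg A\imp(A\imp B)$ as theorems''; both are immediate because adding the \tnd-rule only enlarges the set of derivations, and because the cut used in the reduction is a legitimate rule of the system whose eliminability (guaranteed by the cut-elimination lemma above) is not even needed for the contradiction to go through.
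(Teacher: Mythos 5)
Your proposal is correct and takes essentially the paper's intended route: the paper states this corollary without an explicit proof, precisely because it follows, as you argue, from Lemma 1 (the equivalence over $\ljm$, hence over the larger system $\ljmm$, of the initial sequents $\oo\neg A\imp(A\imp B)$ with weakening:right) combined with the immediately preceding corollary that $\ljmm$ does not prove weakening:right. Your alternative direct reduction to the first corollary (cutting the assumed sequent $\oo\neg A\imp(A\imp B)$ against two applications of $\imp$:left to obtain $\neg A, A\oo B$ for distinct atoms) is likewise just the sub-derivation already displayed in blue in the paper's proof of Lemma 1, so both variants match the paper's argument, with your version making explicit the details (the transfer of Lemma 1 to $\ljmm$, the distinctness of $A$ and $B$) that the paper leaves implicit.
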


\begin{thm}
The following hold on the basis of minimal logic:
\begin{enumerate}
\item Double negation elimination implies Ex falso and Tertium non datur.
\item Ex falso + Tertium non datur imply Double negation elimination.
\item Ex falso does not imply Tertium non datur.
\item Tertium non datur does not impy Ex falso.
\item Tertium non datur does not imply Double negation elimination.
\end{enumerate}
\end{thm}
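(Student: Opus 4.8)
The plan is to assemble the five items from the machinery already in place, treating items 1, 2, 4 and 5 as near-immediate consequences of earlier results and reserving the genuine work for item~3.

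First I would note that items~1 and~2 are simply the two halves of Lemma~2 and so require no fresh argument: part~1 of that lemma already gives that double negation elimination implies ex falso and tertium non datur, while part~2 gives that ex falso together with tertium non datur implies double negation elimination.

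Next I would dispatch items~4 and~5 through the system $\ljmm$, which by construction proves every instance of $A\lor\neg A$ and is therefore a faithful stand-in for ``minimal logic plus tertium non datur''. Item~4 is precisely Corollary~3: $\ljmm$ proves tertium non datur yet fails to prove ex falso, so the one cannot imply the other. Item~5 I would obtain by contradiction: were tertium non datur to imply double negation elimination over minimal logic, then, since $\ljmm$ proves all instances of $A\lor\neg A$, it would prove all instances of $\neg\neg A\imp A$; but Lemma~2 turns double negation elimination into ex falso, so $\ljmm$ would prove ex falso, again contradicting Corollary~3.

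Item~3 is where the real content sits, and I expect it to be the main obstacle. By Lemma~1, adjoining ex falso to $\ljm$ coincides with adjoining weakening:right, so ``minimal logic plus ex falso'' is just full intuitionistic LJ, and the claim becomes that LJ does not prove $\oo A\lor\neg A$ for atomic $A$. I would argue syntactically, in keeping with the paper: extend the cut-elimination of Lemma~3 to LJ by adding the standard reduction for a cut whose cut formula is introduced by weakening:right, and then examine a putative cut-free proof of $\oo A\lor\neg A$. Because the succedent holds at most one formula, the final inference must be $\lor$:right, reducing matters to a cut-free proof of $\oo A$ or of $\oo\neg A$; the first is impossible, since an atomic formula is underivable from the empty antecedent, and the second forces, via $\neg$:right, a cut-free proof of $A\oo$ with empty succedent, equally impossible because atomic $A$ is the principal formula of no left rule and the only axiom on it, $A\oo A$, has nonempty succedent. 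Making this case analysis airtight---verifying that the admissible structural rules (weakening, contraction and exchange on the left) genuinely leave no other route to the endsequent---is the step I expect to cost the most care. A semantic alternative, refuting $A\lor\neg A$ in a two-node Kripke model where $A$ is forced only at the top, would be shorter, but the syntactic route keeps everything inside the paper's cut-elimination framework.
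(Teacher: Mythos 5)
Your decomposition is exactly the paper's own: the paper states this theorem with no proof environment at all, letting it rest on the accumulated machinery, and the intended references are the ones you give --- items 1 and 2 are the two halves of Lemma 2, item 4 is Corollary 3 (with $\ljmm$ standing in for minimal logic plus \tnd), and item 5 follows by combining Corollary 3 with part 1 of Lemma 2, precisely as you argue. The one place you genuinely go beyond the paper is item 3: the paper never discharges it. Nowhere is it shown that $\ljm$ plus \ex\ (equivalently, by Lemma 1, full LJ) fails to prove $A\lor\neg A$; the author evidently leaves this to the standard literature on the intuitionistic underivability of excluded middle. Your syntactic sketch fills that hole in the spirit of the paper's own methods --- cut elimination followed by case analysis on a cut-free proof --- and what it buys is a self-contained paper; the Kripke-model alternative you mention would be shorter but imports semantics the paper deliberately avoids.

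One correction to your item 3 case analysis: once weakening:right is in the system, your claim that the last inference of a cut-free proof of $\oo A\lor\neg A$ \emph{must} be $\lor$:right is not quite right, since it could also be weakening:right. That case is dismissed because its premiss would be the sequent with both sides empty, which no axiom or rule of cut-free LJ can conclude; the same extra case arises for $\oo\neg A$, and weakening:left from the empty sequent arises for $A\oo$. With those cases added, the induction you flag as the delicate step should be phrased as: no sequent of the form $A,\dots,A\oo$ with $A$ atomic and empty succedent is cut-free derivable, since such a sequent is not an axiom and can only be the conclusion of a left structural rule, whose premiss has the same form. So your plan is sound, but the enumeration of possible last inferences has to include the right structural rule you yourself adjoined.
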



\begin{thebibliography}{9}

\bibitem{Gen} Gentzen, Gerhard, \emph{Untersuchungen \"uber das logische Schlie\ss en}, Mathematische Zeitschrift. 39 (2): 176-210, 1934.

\bibitem{Gir} Girard, Jean-Yves, \emph{Proofs and Types}, Cambridge University Press, 1989.

\bibitem{Neg} Negri, Sara, von Plato, Jan, \emph{Structural Proof Theory}, Cambridge University Press, 2001.

\bibitem{Tak} Takeuti, Gaisi, \emph{Proof Theory}, North Holland Publishing Company, 1975.

\bibitem{Tro} Troelstra Anne S., Schwichtenberg Helmut, \emph{Basic Proof Theory}, Cambridge University Press, 2000.

\end{thebibliography}
\end{document}